\documentclass[11pt]{article}
\usepackage[a4paper,margin=28mm]{geometry}
\usepackage[T1]{fontenc}
\usepackage{lmodern,amsmath,amssymb,amsthm,booktabs,array,tikz}
\usepackage[numbers,sort&compress]{natbib}
\usepackage[hidelinks]{hyperref}
\newtheorem{theorem}{Theorem}
\newtheorem{lemma}[theorem]{Lemma}
\newtheorem{proposition}[theorem]{Proposition}
\newcommand{\R}{\mathbb R}
\newcommand{\PP}{\mathbb P}
\newcommand{\area}[1]{[#1]}
\DeclareMathOperator{\rank}{rank}
\title{A counterexample to vertex interpolation\\by planar $C^1$ cubic splines}
\author{Junkai Qiu\\[3pt]
\small School of Mathematical Sciences, Dalian University of Technology\\
\small Dalian 116024, China\\
\small\texttt{qjk@mail.dlut.edu.cn}}
\date{}
\begin{document}
\maketitle
\begin{abstract}
We construct a nondegenerate conforming straight-line triangulation of a polygonal disk on which some vertex data admit no continuously differentiable piecewise polynomial interpolant of total degree at most three. The triangulation has 41 vertices and 56 triangles, and the graph induced by its interior vertices is a tree with eight arms of length two. Every cubic $C^1$ spline on this triangulation satisfies an explicit linear relation with integer coefficients among its vertex values. We derive the relation by a weighted sum of Bernstein--B\'ezier smoothness equations and verify it using rational coordinates and weights. This disproves Alfeld's conjecture on vertex interpolation. The nonexistence proof does not require a matrix rank computation. A separate computation in exact arithmetic gives rank 40 for the vertex evaluation map and dimension 107 for the spline space, attaining the classical dimension lower bound for this triangulation.
\end{abstract}
\noindent\textbf{Keywords:} bivariate spline; vertex interpolation; Bernstein--B\'ezier form; triangulation; counterexample.
\par\smallskip
\noindent\textbf{2020 Mathematics Subject Classification:} Primary 41A15; Secondary 41A05, 41A63, 65D07.

\section{Introduction}
Let $\Delta$ be a finite conforming straight-line triangulation of a polygonal disk $\Omega\subset\R^2$. Every triangle is assumed to have positive area, and any two distinct triangles intersect in a common edge, a common vertex, or the empty set. Put
\[
S^1_3(\Delta)=\{s\in C^1(\Omega):s|_T\in\PP_3\text{ for every }T\in\Delta\},
\]
where $\PP_3$ denotes the bivariate polynomials of total degree at most three. We use the equivalent piecewise definition of $C^1$: the polynomial pieces and their first derivatives agree across every interior edge, and all pieces incident to a vertex have the same value and gradient there. This includes boundary vertices, but imposes no prescribed boundary values or derivatives. Let $V(\Delta)$ be the vertex set. The problem of vertex interpolation asks whether
\begin{equation}\label{eq:ev}
 E_V:S^1_3(\Delta)\longrightarrow\R^{V(\Delta)},\qquad
 E_Vs=(s(v))_{v\in V(\Delta)},
\end{equation}
is surjective.

Alfeld conjectured that $E_V$ is surjective for every such triangulation in \cite[Conjecture 11]{Alfeld2000}, and restated the conjecture in \cite[Section 10, Conjecture 4]{Alfeld2016}. The same question appears in the interpolation survey of N\"urnberger and Zeilfelder \cite{NZ2000} in terms of an interpolation set containing all vertices.

For a fixed triangulation, $E_V$ is surjective if and only if the vertex evaluations are linearly independent. In that case one can add point evaluations until they form a basis of the dual spline space: if their span were proper, a nonzero spline would vanish at all chosen points, and evaluation at a point where it is nonzero would enlarge the span. Finite dimensionality makes this procedure terminate. Conversely, an interpolation set containing all vertices makes the vertex evaluations independent. Thus the two formulations are equivalent.

The general Bernstein--B\'ezier framework and constructions on refined triangulations are developed in \cite{LS2007}. Positive interpolation results are known for specific classes, including triangulations obtained from checkerboard quadrangulations \cite{NSZ2001}. These results do not assert surjectivity on every fixed triangulation. Previous negative results address related but distinct properties. De Boor and H\"ollig \cite{BH1983} exhibited a failure of full approximation order. Peters and Sitharam \cite{PS1992} proved instability of vertex interpolation under refinement and failure of arbitrary prescription of Bernstein--B\'ezier center coefficients. Those conclusions concern uniform bounds or different functionals. On a fixed triangulation, a surjective vertex evaluation map has a bounded linear right inverse, but the bound need not be uniform over triangulations.

We give a finite triangulation with incompatible vertex data. Let $G_I(\Delta)$ denote the graph induced by the interior vertices in the one-skeleton of $\Delta$.
\begin{theorem}\label{thm:main}
There exists a conforming nondegenerate straight-line triangulation $\Delta$ of a polygonal disk with 24 boundary vertices, 17 interior vertices, and 56 triangles such that $G_I(\Delta)$ is a connected tree and $E_V$ is not surjective. More precisely, its vertices can be labeled
\[
O,\quad A_i,U_i,W_i,B_i,C_i\quad(0\le i<8)
\]
so that every $s\in S^1_3(\Delta)$ satisfies
\begin{equation}\label{eq:identity}
-756s(O)+11\sum_{j=0}^3s(A_{2j})
+864\sum_{j=0}^3s(A_{2j+1})-343\sum_{i=0}^7s(U_i)=0.
\end{equation}
Thus the vertex data equal to one at $O$ and zero elsewhere admit no interpolant.
\end{theorem}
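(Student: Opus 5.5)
The plan is to build the triangulation explicitly and then derive \eqref{eq:identity} as a weighted sum of Bernstein--B\'ezier smoothness conditions, with no rank computation.

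\textbf{The triangulation.} Place $O$ at the origin. The interior vertices are $O$ and $A_i,U_i$ for $0\le i<8$, giving $17$. Each arm is the path $O$--$A_i$--$U_i$, so $G_I(\Delta)$ is a star-like tree with eight arms of length two. The $24$ boundary vertices are $W_i,B_i,C_i$. Around $O$ the eight triangles $OA_iA_{i+1}$ form the star of $O$. Each $A_i$ is joined to $U_i$ and to some boundary vertices, and each $U_i$ is surrounded by boundary vertices $W_i,B_i,C_i$ and neighbours from the adjacent arm. All coordinates will be rational and will have an eightfold structure alternating between even and odd arms. That structure is why the coefficients $11$ and $864$ differ on even and odd $A$'s. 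I will list coordinates and the triangle list, then check the counts. Euler's formula for a disk with $V=41$ and $24$ boundary edges gives $F=2V-V_b-2=82-24-2=56$. I also need to check positive orientation of all triangles and that each interior star is a full cycle. These checks are finite and exact.

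\textbf{The BB calculus.} Write each piece $s|_T$ in cubic Bernstein--B\'ezier form with coefficients $c_\xi$ on the domain points of $T$. At a vertex $v$, $C^1$ means the coefficients at $v$ and at the first ring around $v$ all lie in one plane, the tangent plane at $v$. In particular, $c_v=s(v)$ and each ring coefficient equals $s(v)+\tfrac13\nabla s(v)\cdot(w-v)$ along edge $vw$. Across each interior edge $[u,w]$ with opposite vertices $p,q$, $C^1$ gives one linear equation per row parallel to the edge. The equation links the coefficient next to $p$, the coefficient next to $q$, and the two edge coefficients, through the barycentric coordinates of $q$ relative to the triangle $(u,w,p)$.

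The unknowns are the vertex values $s(v)$, the gradients $\nabla s(v)$, the edge-interior "middle" coefficients, and the triangle centers. The vertex-ring condition removes all ring coefficients. What remains are the middle-row $C^1$ equations, two per interior edge. These involve the triangle centers and the gradient data.

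\textbf{The certificate.} I want rational weights $\lambda_e$ on these equations such that
\[\sum_e\lambda_e(\text{equation}_e)=-756s(O)+11\textstyle\sum s(A_{2j})+864\sum s(A_{2j+1})-343\sum s(U_i).\]
Three cancellations are needed. First, every triangle center must cancel; the centers near the boundary appear in only one or two equations, which forces most weights. Second, every gradient component must cancel, including those at boundary vertices. Third, all boundary vertex values must drop out. I would use the eightfold symmetry to reduce to one period of unknowns, solving arm-by-arm from the boundary inward: eliminate the boundary centers first, then the gradients at $W_i,B_i,C_i$, then at $U_i$, then at $A_i$, and finally at $O$. At $O$ the eight-fold sum must kill $\nabla s(O)$, which fixes the relative weights of even and odd arms. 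The final verification is substitution: a finite identity in exact rational arithmetic.

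\textbf{Main obstacle.} The real difficulty is the geometry. For generic positions the constraint system has no nontrivial certificate, since a vertex value relation needs a near-singular configuration. The coordinates must be tuned so that the left null space of the smoothness system has a vector supported away from the centers and gradients. Each arm's local system then has to have a one-dimensional cokernel that survives the elimination into the $O$ equation. To find such coordinates, I would parametrize one even and one odd arm, impose the cancellation conditions as polynomial equations in the coordinates, and search for rational solutions. Once found, the identity is self-certifying. The final claim then follows because the data $\delta_O$ give $-756\neq0$ on the left-hand side of \eqref{eq:identity}.
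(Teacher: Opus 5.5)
\textbf{The gap: the explicit construction is missing.} Your certificate mechanism is the paper's Lemma~\ref{lem:certificate}: weight one $C^1$ equation per nonboundary edge so that the triangle centers cancel (zero face sums $F_T$) and every vertex gradient cancels ($G_i=0$), then read off the surviving vertex-value coefficients. There is a bookkeeping slip here. For cubics there is exactly one such ``middle'' equation per edge, the one containing the two centers. The other two follow from the common tangent plane at the endpoints, and there are no edge coefficients outside the vertex rings.

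The theorem asserts one specific identity, and its whole content is an explicit triangulation together with an explicit certificate. You supply neither. ``Parametrize an even and an odd arm and search for rational solutions'' is a research plan. Even a successful search would give some relation, with no reason for it to be the stated one with coefficients $-756,11,864,-343$.

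\textbf{The combinatorics you sketch cannot satisfy the hypotheses.} Suppose the $A_i$ are interior and the star of $O$ is the eight triangles $OA_iA_{i+1}$. Then each edge $A_iA_{i+1}$ joins two interior vertices, so $G_I(\Delta)$ contains the wheel on $O,A_0,\dots,A_7$ and is not a tree. In general, consecutive vertices in the link of an interior vertex are adjacent. So if $G_I(\Delta)$ is a tree, the interior neighbours of $O$ must be separated by boundary vertices in its link.

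The paper does exactly this. The link of $O$ is $A_0,U_0,A_1,U_1,\dots$ with every $A_i$ on the nonconvex boundary, the tree is $O$--$U_i$--$W_i$, and $B_i,C_i$ are boundary vertices. The coordinates carry deliberate collinearities:
\begin{itemize}
\item $U_0=\frac17A_0+\frac67A_1$ lies on $A_0A_1$, which forces $w_{O,U_i}=w_{W_i,U_i}=0$.
\item $B_0$ and $C_0$ lie on the extensions of $A_1W_0$ and $A_0W_0$ beyond $W_0$. This makes the only endpoint weights at $B_i$ and $C_i$ vanish, so their values and gradients drop out without any elimination.
\end{itemize}
The explicit edge scalars $r_e$, alternating through $z_i=(-1)^i$, then allow a direct check of $F_T=0$ and $G_i=0$. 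The resulting $R_i$, scaled by $27/2$, give \eqref{eq:identity}. Concrete choices of this kind are what your proposal would need to supply.
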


The proof consists of a geometric construction and a finite algebraic certificate. We first derive the identity used to produce the certificate, then give all coordinates and verify its entries. The exact ranks recorded at the end are supplementary and are not needed to prove Theorem~\ref{thm:main}. No minimality of the number of vertices is asserted.

\section{An identity from the smoothness equations}
Write $E^\circ$ for the nonboundary edges of $\Delta$; these include edges with a boundary endpoint. For points $p_a,p_b,p_c$ put
\[
\area{abc}=\det(p_b-p_a,p_c-p_a).
\]
For each $e=ij\in E^\circ$, choose its two triangles in counterclockwise order as $T=(i,j,k)$ and $T'=(j,i,l)$, and set
\begin{equation}\label{eq:weights}
\alpha_e=\area{ijk}>0,\quad \beta_e=\area{jil}>0,\qquad
a_{ij}=-\frac{\area{jkl}}{\alpha_e\beta_e},\quad
a_{ji}=-\frac{\area{ilk}}{\alpha_e\beta_e}.
\end{equation}
Reversing $i,j$ also interchanges $k,l$. Both numerator determinants and the product in the denominator retain the values appropriate to their endpoints. Thus the endpoint weights are well defined independently of the chosen orientation.

Assign an \emph{unoriented} real scalar $r_e$ to each nonboundary edge, and set $r_e=0$ on boundary edges. Define
\begin{equation}\label{eq:balances}
F_T=\sum_{e\subset T}r_e,\qquad
R_i=\sum_{j:ij\in E^\circ}a_{ij}r_{ij},\qquad
G_i=\sum_{j:ij\in E^\circ}a_{ij}r_{ij}(p_j-p_i).
\end{equation}
The vertex sums are taken at \emph{all} vertices, not just interior vertices.

\begin{lemma}\label{lem:certificate}
If $F_T=0$ for every triangle and $G_i=0$ for every vertex, then
\[
\sum_{i\in V(\Delta)}R_i s(i)=0\qquad(s\in S^1_3(\Delta)).
\]
In particular, any nonzero vector $(R_i)_i$ proves that $E_V$ is not surjective.
\end{lemma}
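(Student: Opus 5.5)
The plan is to write each polynomial piece $s|_T$ in Bernstein--B\'ezier form and to take a suitable linear combination of the $C^1$ smoothness equations across every nonboundary edge $e$, scaled by $r_e$. The combination should be chosen so that, after summing over all edges, every B\'ezier coefficient cancels except those that can be rewritten through vertex values and vertex gradients. Concretely, for a cubic on $T=(i,j,k)$ the domain points split into three groups:
\begin{itemize}
\item the vertex points, carrying coefficient $s(i)$;
\item the near-vertex points $c^T_{i\to j}=s(i)+\tfrac13\nabla s(i)\cdot(p_j-p_i)$, which depend only on the $C^1$ data at the vertex and not on the triangle;
\item the interior edge points and the center $c^T_{111}$.
\end{itemize}
Because $s$ is $C^1$, the first two groups are genuinely shared quantities. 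This is exactly where the boundary vertices enter, since gradients agree there too.

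First, for an edge $e=ij$ with triangles $T=(i,j,k)$ and $T'=(j,i,l)$, I would write the three first-order smoothness equations. I would express $p_l$ in barycentric coordinates of $T$: its $k$-coordinate is $-\beta_e/\alpha_e$ and its $i,j$-coordinates are $\area{ljk}/\alpha_e$ and $\area{ilk}/\alpha_e$. Symmetrically, $p_k$ is expressed in the coordinates of $T'$. Next I would form the specific symmetric combination of these equations, e.g. a second-difference type combination equivalent to requiring that the jump $s|_T-s|_{T'}=L_e^2q_e$ is annihilated by a fixed functional. The aim is for the coefficients on the interior edge points to cancel between the two sides, leaving
\begin{itemize}
\item the centers $c^T_{111}$ and $c^{T'}_{111}$ with coefficient $\pm$ a common constant, normalised by dividing by $\alpha_e\beta_e$;
\item the vertex and near-vertex coefficients at $i$ and $j$.
\end{itemize}
After this normalisation, the coefficients of $s(i)$ and of $\nabla s(i)\cdot(p_j-p_i)$ should come out as multiples of $a_{ij}$, and those at $j$ as multiples of $a_{ji}$. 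Here I would use the identities $\area{ijk}+\area{jil}=\area{jkl}+\area{ilk}$ and the orientation symmetry already noted after \eqref{eq:weights}.

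Then I sum over $e\in E^\circ$ with weights $r_e$. The center $c^T_{111}$ of each triangle collects the total coefficient $F_T$ (up to a fixed constant), which vanishes by hypothesis. At each vertex $i$, the gradient terms collect to $\nabla s(i)\cdot G_i$, which also vanishes by hypothesis. What remains is $\sum_i R_is(i)$, and since every summand was an identically valid smoothness relation for $s$, this sum equals $0$. The final claim is then immediate. If $(R_i)\neq0$, the vector $R$ is orthogonal to the image of $E_V$, so the image is a proper subspace of $\R^{V(\Delta)}$.

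The main obstacle is the local bookkeeping in the first step. One must find the right combination of the three smoothness equations on an edge so that the edge-interior coefficients cancel and the remaining coefficients are exactly $-\area{jkl}/(\alpha_e\beta_e)$ and $-\area{ilk}/(\alpha_e\beta_e)$, with the normalisations of the value part and of the gradient part matching those used in $R_i$ and $G_i$. I would first verify this on a single edge, taking $p_i=0$ and $p_j=(1,0)$ and checking against the cubic $L_e^2q_e$. The general case then follows by affine invariance of barycentric coordinates together with the observation that all area ratios transform covariantly.
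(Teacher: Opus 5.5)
Your global bookkeeping is the paper's: one relation per nonboundary edge, weighted by $r_e$ and summed, with the centers collecting the face sums, the gradient terms collecting $G_i\cdot\nabla s(i)$, and the values collecting $R_i s(i)$. Your final orthogonality remark is also fine. The gap is that the per-edge identity, which is the whole content of the lemma, is left as an unresolved ``obstacle''. Moreover, the form you predict for it is wrong exactly where the hypothesis $F_T=0$ has to be used.

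You expect the two centers to appear with coefficients $\pm$ a common constant. Suppose the edge relation contained $c_e(q_T-q_{T'})$. Then $q_T$ would collect $\sum_{e\subset T}\pm c_e r_e$, a signed weighted sum that the unoriented condition $F_T=\sum_{e\subset T}r_e=0$ does not control. This also contradicts your later claim that the center collects $F_T$. What is needed, and true, is different: after dividing by $\alpha_e\beta_e$, the centers carry $1/\alpha_e=1/[T]$ and $1/\beta_e=1/[T']$. These have the same sign, and each depends only on its own triangle, so the center terms sum to $\sum_T q_T F_T/[T]$.

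Your search for a ``combination of the three smoothness equations'' that cancels edge-interior coefficients is also misdirected. For a cubic, each edge carries only its two vertex coefficients and $b_{210},b_{120}$, all of which are shared vertex data, so there is nothing to cancel. The two outer $C^1$ conditions at $i$ and $j$ reduce to $0=0$ once values and gradients are common. This is because both sides equal $f_i+\frac13 g_i\cdot(p_l-p_i)$, and $p_l-p_i$ is the barycentric combination of $p_j-p_i$ and $p_k-p_i$.

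Only the middle condition carries information: $b_{T'}(p_i,p_j,p_l)=b_T(p_i,p_j,p_l)$, since the blossom of the jump $\ell_{ij}^2h$ vanishes at $(p_i,p_j,\cdot)$. Expanding $p_l$ in the barycentric coordinates you already wrote down gives $\alpha_e q_{T'}+\beta_e q_T=[jkl]\,b^T_{210}+[ilk]\,b^T_{120}$. Dividing by $\alpha_e\beta_e$ and substituting $b^T_{210}=f_i+\frac13 g_i\cdot(p_j-p_i)$ (and likewise at $j$) directly produces the coefficients $a_{ij}$ and $\frac13 a_{ij}$. The identity $[ijk]+[jil]=[jkl]+[ilk]$ is not needed. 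With this single equation per edge, your summation argument goes through as written.
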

\begin{proof}
For a spline $s$, set $f_i=s(i)$ and $g_i=\nabla s(i)$. On $T=(i,j,k)$ write $q_T=b^T_{111}$ for the central Bernstein coefficient. The coefficients on $ij$ adjacent to its endpoints are
\begin{equation}\label{eq:hermite}
b^T_{210}=f_i+\tfrac13g_i\cdot(p_j-p_i),\qquad
b^T_{120}=f_j+\tfrac13g_j\cdot(p_i-p_j).
\end{equation}
Here the Bernstein basis has the usual multinomial normalization. The affine coordinates of $p_l$ relative to $T$ give
\[
p_l=\frac{\area{jkl}}{\alpha_e}p_i
+\frac{\area{ilk}}{\alpha_e}p_j
-\frac{\beta_e}{\alpha_e}p_k;
\]
the three displayed coefficients sum to one.

For completeness, let $b_T$ be the symmetric function affine in each of three arguments with $b_T(x,x,x)=s|_T(x)$, the cubic blossom. The difference of the polynomial pieces across $ij$ is divisible by $\ell_{ij}^2$, where $\ell_{ij}$ vanishes on its supporting line. The blossom of $\ell_{ij}^2 h$, with $h$ affine, vanishes at $(p_i,p_j,x)$ for every $x$: each term contains a vanishing factor at $p_i$ or $p_j$. Consequently $b_{T'}(p_i,p_j,p_l)=b_T(p_i,p_j,p_l)$. Expanding in the third argument yields
\[
q_{T'}=\frac{\area{jkl}}{\alpha_e}b^T_{210}
+\frac{\area{ilk}}{\alpha_e}b^T_{120}
-\frac{\beta_e}{\alpha_e}q_T.
\]
Multiplying this formula by $\alpha_e$, substituting \eqref{eq:hermite}, and dividing by $\alpha_e\beta_e$ gives
\[
0=\frac{q_T}{\alpha_e}+\frac{q_{T'}}{\beta_e}
+a_{ij}\bigl(f_i+\tfrac13g_i\cdot(p_j-p_i)\bigr)
+a_{ji}\bigl(f_j+\tfrac13g_j\cdot(p_i-p_j)\bigr).
\]
Multiplying by $r_e$ and summing over $E^\circ$ yields
\begin{equation}\label{eq:sum}
0=\sum_{T\in\Delta}\frac{q_T}{\area{T}}F_T
 +\sum_iR_i f_i+\tfrac13\sum_iG_i\cdot g_i,
\end{equation}
where $\area{T}$ is the positive doubled area. Indeed, the contribution to $q_T$ from each incident nonboundary edge is $r_e/\area{T}$; zero boundary scalars complete the face sum. The hypotheses eliminate the first and third sums.
\end{proof}

\section{The triangulation}
Let $Q(x,y)=(-y,x)$. Set $p_O=(0,0)$ and define five vertex families by Table~\ref{tab:coords} and
\begin{equation}\label{eq:rot}
p_{X_{2j+t}}=Q^j p_{X_t},\qquad
X\in\{A,U,W,B,C\},\quad t\in\{0,1\},\quad 0\le j<4.
\end{equation}
All subscripts below are read modulo eight.
\begin{table}[ht]
\centering
\caption{Integer coordinates after uniform scaling by 504.}\label{tab:coords}
\begin{tabular}{ccc}\toprule
$X$&$504p_{X_0}$&$504p_{X_1}$\\\midrule
$A$&$(504,0)$&$(126,126)$\\
$U$&$(180,108)$&$(108,180)$\\
$W$&$(198,114)$&$(114,198)$\\
$B$&$(210,112)$&$(133,147)$\\
$C$&$(147,133)$&$(112,210)$\\\bottomrule
\end{tabular}
\end{table}
For each $i$, take the following seven triangles:
\begin{equation}\label{eq:faces}
\begin{aligned}
T_{i,1}&=(O,A_i,U_i),&T_{i,2}&=(O,U_i,A_{i+1}),\\
T_{i,3}&=(W_i,A_i,B_i),&T_{i,4}&=(W_i,B_i,C_i),\\
T_{i,5}&=(W_i,C_i,A_{i+1}),&T_{i,6}&=(W_i,A_{i+1},U_i),\\
T_{i,7}&=(W_i,U_i,A_i).
\end{aligned}
\end{equation}
The triangles and all their faces define $\Delta$.

\begin{lemma}\label{lem:geometry}
The construction defines a conforming nondegenerate triangulation of a polygonal disk. Its boundary cycle is
\[
A_0,B_0,C_0,A_1,B_1,C_1,\ldots,A_7,B_7,C_7,A_0.
\]
Its interior vertices are $O,U_i,W_i$, and the edges with both endpoints in the interior are precisely $OU_i,U_iW_i$.
\end{lemma}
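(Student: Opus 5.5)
The plan is to reduce everything to finitely many exact integer sign checks, using the rotational symmetry, and then to assemble the global picture from an angular decomposition around $O$.

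\textbf{Symmetry reduction.} Since $Q$ is a rotation by $\pi/2$ and \eqref{eq:rot} together with \eqref{eq:faces} are equivariant under $i\mapsto i+2$, it suffices to verify all local statements for $i=0$ and $i=1$. The triangles for the other indices are images of these under $Q^j$, which preserves orientation, areas and incidences.

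\textbf{Orientation and areas.} For $i=0,1$ I would compute the fourteen doubled areas $\area{T_{i,k}}$ from the integer coordinates in Table~\ref{tab:coords}, using $p_{A_2}=Qp_{A_0}=(0,504)/504$, and check that each is strictly positive. This gives nondegeneracy and counterclockwise orientation of all $56$ triangles.

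\textbf{Global tiling via angular wedges.} The argument has three parts.
\begin{enumerate}
\item Place the vertices by angle. From the coordinates, $A_0$ lies at angle $0$, $U_0,W_0,B_0,C_0$ lie strictly between $0$ and $\pi/4$, and $A_1$ lies at $\pi/4$. Likewise $B_1,U_1,W_1,C_1$ lie strictly between $\pi/4$ and $\pi/2$, and $A_2$ lies at $\pi/2$. By symmetry, the wedge $\Sigma_i$ between the rays $OA_i$ and $OA_{i+1}$ contains all vertices with index $i$.
\item Split $\Sigma_i$ by the path $A_iU_iA_{i+1}$. The triangles $T_{i,1},T_{i,2}$ tile the part of $\Sigma_i$ on the $O$-side of this path. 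This holds because $U_i$ lies strictly inside the wedge and both triangles are positively oriented, so they meet only along $OU_i$.
\item Tile the outer part. The outer part of $\Sigma_i$ is the pentagon $P_i=A_iB_iC_iA_{i+1}U_i$. I would show $P_i$ is a simple polygon that is star-shaped with respect to $W_i$. Concretely, I would check that the five triangles $T_{i,3},\dots,T_{i,7}$, which fan around $W_i$ in the cyclic order $A_i,B_i,C_i,A_{i+1},U_i$, are all positive (already done) and that their angles at $W_i$ sum to $2\pi$. The sum is automatic once the five rays from $W_i$ are seen to be in strictly increasing cyclic angular order, which reduces to five more cross-product sign checks. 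A positively oriented fan whose apex angles total exactly one turn is a tiling of the polygon formed by its outer edges.
\end{enumerate}
Since distinct wedges $\Sigma_i$ meet only along the rays $OA_i$, and within each wedge the pieces meet only along $A_iU_i$ and $U_iA_{i+1}$, any two triangles intersect in a common edge, a common vertex, or not at all. The union is the closed polygon bounded by $A_0B_0C_0A_1\cdots C_7A_0$. This polygon is simple because its successive vertices have nondecreasing angle around $O$, with strict increase except across the segments through the $A_i$. It is therefore a disk.

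\textbf{Combinatorics.} It remains to read off the interior vertices and edges. $O$ is interior since its link is the closed cycle $A_0U_0A_1\cdots U_7A_0$. Each $U_i$ is interior, with link $O,A_i,W_i,A_{i+1}$ forming a cycle. Each $W_i$ is interior, with link the pentagon $P_i$. The vertices $A_i,B_i,C_i$ lie on the boundary cycle. Listing the edges of \eqref{eq:faces}, the only edges joining two of $O,U_i,W_i$ are $OU_i$ and $U_iW_i$; no edge $W_iO$ or $U_iU_{i+1}$ occurs.

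The main obstacle is the middle step, namely certifying that the fan at $W_i$ covers $P_i$ exactly once and that the pieces in adjacent wedges do not overlap. I would handle it entirely through the exact integer sign checks of orientation and angular ordering described above, rather than through any topological degree argument.
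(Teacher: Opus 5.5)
Your proposal is correct and has the same global structure as the paper's proof: eight sectors between the rays $OA_i$, each made of two triangles at $O$ plus a fan at $W_i$, with equivariance under $Q$. The difference is that you certify each sector by generic orientation and winding checks, whereas the paper uses exact affine relations.

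The paper records $U_0=\tfrac17A_0+\tfrac67A_1$, $B_0=W_0+\tfrac16(W_0-A_1)$ and $C_0=W_0+\tfrac16(W_0-A_0)$. So your ``path'' $A_0U_0A_1$ is a straight segment, and your ``pentagon'' $P_0$ is the convex quadrilateral $A_0B_0C_0A_1$ with a straight angle at $U_0$. The point $W_0$ is the crossing point of its diagonals. Convexity and the opposite signs of $\area{A_0A_1O}=\tfrac14$ and $\area{A_0A_1W_0}=-\tfrac1{56}$ then separate the inner triangle from the fan by the single line $A_0A_1$. A fan at a diagonal crossing needs no angle count, and reflection in $x=y$ replaces your separate treatment of sector $1$.

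Your route is more robust: it uses only strict inequalities, so it would survive perturbations that break these collinearities. Two of its steps, however, should be made explicit.

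First, star-shapedness of the fan does not by itself show that $P_i$ lies beyond the path $A_iU_iA_{i+1}$. You need to argue that the interior of $P_i$ is connected, misses the path, and lies locally on the far side of it. The local statement holds because $T_{i,1},T_{i,7}$ and $T_{i,2},T_{i,6}$ sit on opposite sides of their shared edges. Alternatively, here you can simply check that $W_i,B_i,C_i$ lie strictly beyond the line $A_iA_{i+1}$.

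Second, seen from $W_0$, the directions to $C_0$ and $A_0$ are exactly opposite, and so are those to $B_0$ and $A_1$. Some natural cross products in your angular-order certificate are therefore zero rather than of strict sign, and the certificate must allow for this.

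Finally, the polar angles of the boundary vertices increase strictly across each $A_i$ as well. For example, the slopes of $OC_0$, $OA_1$, $OB_1$ are $19/21<1<21/19$. Your stated exception is inaccurate, though harmless.
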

\begin{proof}
Consider sector $i=0$. Substituting the coordinates gives
\begin{equation}\label{eq:sector}
U_0=\tfrac17A_0+\tfrac67A_1,\qquad
B_0=W_0+\tfrac16(W_0-A_1),\qquad
C_0=W_0+\tfrac16(W_0-A_0).
\end{equation}
Point notation in affine equalities refers to the corresponding coordinates. Also,
\[
\area{A_0A_1O}=\tfrac14,\qquad
\area{A_0A_1W_0}=-\tfrac1{56}.
\]
The diagonals $A_0C_0$ and $B_0A_1$ meet in their relative interiors at $W_0$ and are not collinear. Thus $A_0B_0C_0A_1$ is a convex quadrilateral. Its interior lies on the opposite side of the line $A_0A_1$ from $O$, and its intersection with the triangle $OA_0A_1$ is exactly the segment $A_0A_1$. The first two faces of \eqref{eq:faces} partition $OA_0A_1$; the last five partition this quadrilateral by its fan at $W_0$, with the base split at $U_0$. The split is the same on both sides, so no hanging vertex is introduced. In the order of \eqref{eq:faces} the doubled areas are
\begin{equation}\label{eq:areas}
\tfrac3{14},\quad\tfrac1{28},\quad\tfrac1{336},\quad
\tfrac1{2016},\quad\tfrac1{336},\quad\tfrac1{392},\quad\tfrac3{196}.
\end{equation}
All are positive.

Every vertex in this sector lies in $0\le y\le x$, and only $O,A_0,A_1$ lie on its bounding rays. Its intersection with these rays consists of the segments $OA_0$ and $OA_1$. Reflection across $x=y$ maps the sector to sector $i=1$, reversing the order of the outer endpoints and interchanging $B$ and $C$. Rotating these two sectors by $Q$ gives the remaining six. Their interiors are disjoint, adjacent sectors meet along exactly one radial edge, and nonadjacent sectors meet at most at $O$. Each sector has one outer polygonal arc. These eight arcs join to the displayed simple boundary cycle, while the eight sectors fill a disk. The face intersections within each sector and across its two radial sides are common simplices. This proves conformity and the disk assertion.

The boundary cycle contains exactly $A_i,B_i,C_i$. Inspection of \eqref{eq:faces} gives the asserted interior vertices and the edges joining them. These form eight paths $O-U_i-W_i$, joined only at $O$, and hence a tree. The construction has 41 vertices, 56 triangles, 24 boundary edges, and 72 nonboundary edges.
\end{proof}

\begin{figure}[ht]
\centering
\begin{tikzpicture}[scale=2.8]
\foreach \j in {0,1,2,3}{
 \begin{scope}[rotate=90*\j]
 \foreach \reflection in {1,-1}{
  \begin{scope}
   \ifnum\reflection=-1\relax\pgftransformcm{0}{1}{1}{0}{\pgfpointorigin}\fi
   \coordinate (O) at (0,0);\coordinate (A) at (1,0);
   \coordinate (D) at (.25,.25);\coordinate (U) at ({5/14},{3/14});
   \coordinate (W) at ({11/28},{19/84});\coordinate (B) at ({5/12},{2/9});
   \coordinate (C) at ({7/24},{19/72});
   \draw[gray,thin] (O)--(A)--(U)--(D)--(O) (W)--(A) (W)--(B) (W)--(C) (W)--(D);
   \draw[black] (A)--(B)--(C)--(D);
   \draw[blue!65!black,line width=.7pt] (O)--(U)--(W);
   \foreach \v in {O,A,D,U,W,B,C}{\fill (\v) circle (.012);}
  \end{scope}
 }
 \end{scope}
}
\node[below left] at (0,0) {$O$};
\node[right] at (1,0) {$A_0$};
\node[above right] at (.25,.25) {$A_1$};
\end{tikzpicture}\hfill
\begin{tikzpicture}[scale=5]
\coordinate (O) at (0,0);\coordinate (A) at (1,0);
\coordinate (D) at (.25,.25);\coordinate (U) at ({5/14},{3/14});
\coordinate (W) at ({11/28},{19/84});\coordinate (B) at ({5/12},{2/9});
\coordinate (C) at ({7/24},{19/72});
\fill[blue!4] (O)--(A)--(B)--(C)--(D)--cycle;
\draw (O)--(A)--(B)--(C)--(D)--cycle;
\draw[gray] (A)--(U)--(D) (W)--(A) (W)--(B) (W)--(C) (W)--(D);
\draw[blue!65!black,line width=.8pt] (O)--(U)--(W);
\foreach \v in {O,A,D,U,W,B,C}{\fill (\v) circle (.007);}
\node[below] at (O) {$O$};\node[below] at (A) {$A_0$};
\node[left] at (D) {$A_1$};\node[below left] at (U) {$U_0$};
\node[above right] at (.49,.32) {$W_0$};\draw[thin] (.49,.32)--(W);
\node[right] at (.6,.23) {$B_0$};\draw[thin] (.6,.23)--(B);
\node[above] at (C) {$C_0$};
\end{tikzpicture}
\caption{Overview of the complete triangulation (left), with the interior tree highlighted, and an enlarged sector $i=0$ (right). Both panels preserve the aspect ratio. The labels $A_0,A_1$ identify the sector in the overview; the short edges near $W_0$ are resolved in the enlarged panel.}\label{fig:mesh}
\end{figure}

\section{The certificate and the proof of the theorem}
Put $z_i=(-1)^i$ and assign the edge scalars in Table~\ref{tab:current}. The nine types, each with eight instances, list all nonboundary edges exactly once. Boundary edges have zero scalar.
\begin{table}[ht]
\centering\caption{Unoriented edge scalars.}\label{tab:current}
\begin{tabular}{cccccc}\toprule
$e$&$r_e$&$e$&$r_e$&$e$&$r_e$\\\midrule
$OU_i$&$10$&$U_iW_i$&$10$&$OA_i$&$z_i$\\
$U_iA_i$&$-10-z_i$&$U_iA_{i+1}$&$-10+z_i$&$W_iA_i$&$z_i$\\
$W_iB_i$&$-z_i$&$W_iC_i$&$z_i$&$W_iA_{i+1}$&$-z_i$\\\bottomrule
\end{tabular}
\end{table}

Since $z_{i+1}=-z_i$, the face sums in order are
\[
\begin{gathered}
z_i+10-10-z_i,\quad 10-10+z_i-z_i,\quad z_i-z_i,\quad -z_i+z_i,\\
z_i-z_i,\quad -z_i+10-10+z_i,\quad10+z_i-10-z_i.
\end{gathered}
\]
Thus $F_T=0$ for all 56 faces.

Write $w_{ij}=a_{ij}r_{ij}$. At $O$ one obtains
\begin{equation}\label{eq:center}
w_{O,A_{2j}}=-6,\qquad w_{O,A_{2j+1}}=-8,\qquad
w_{O,U_i}=0.
\end{equation}
For instance, the opposite vertices at $OA_0$ are $U_0,U_7$, and
\[
\area{OA_0U_0}=\area{A_0OU_7}=\tfrac3{14},\qquad
\area{A_0U_0U_7}=\tfrac{27}{98},
\]
which gives $w_{O,A_0}=-6$. The zero weights on $OU_i$ and the two orbits of $A_i$ give
\[
\begin{aligned}
G_O&=-6\sum_{j=0}^3Q^j(1,0)-8\sum_{j=0}^3Q^j(1/4,1/4)\\
&=-6\bigl((1,0)+(0,1)+(-1,0)+(0,-1)\bigr)\\
&\quad-8\bigl((1/4,1/4)+(-1/4,1/4)+(-1/4,-1/4)+(1/4,-1/4)\bigr)\\
&=(0,0),\qquad R_O=4(-6)+4(-8)=-56.
\end{aligned}
\]

Table~\ref{tab:determinants} supplies the area data for every edge type up to the symmetries described below. In each row the two adjacent triangles are $(i,j,k)$ and $(j,i,l)$ in positive orientation. The endpoint weights are obtained by the two scalar divisions
\[
w_{ij}=-r_{ij}\frac{[jkl]}{\alpha_e\beta_e},\qquad
w_{ji}=-r_{ij}\frac{[ilk]}{\alpha_e\beta_e}.
\]
For example, the $U_0A_0$ row gives
\[
w_{U_0,A_0}=\frac{11(19/84)}{(3/196)(3/14)}=\frac{20482}{27},\qquad
w_{A_0,U_0}=\frac{11(1/294)}{(3/196)(3/14)}=\frac{308}{27}.
\]
The $W_0B_0$ row similarly gives $w_{W_0,B_0}=2352$ and $w_{B_0,W_0}=0$. Thus all entries, including the zero weights at boundary vertices, can be checked directly from the displayed rational data.
\begin{table}[ht]
\centering\small
\caption{Area determinants for the complete set of representative edges.}\label{tab:determinants}
\begin{tabular}{ccccccc}\toprule
$ij$&$k$&$l$&$\alpha_e$&$\beta_e$&$[jkl]$&$[ilk]$\\\midrule
$OA_0$&$U_0$&$U_7$&$3/14$&$3/14$&$27/98$&$15/98$\\
$OA_1$&$U_1$&$U_0$&$1/28$&$1/28$&$-1/98$&$4/49$\\
$OU_0$&$A_1$&$A_0$&$1/28$&$3/14$&$0$&$1/4$\\
$U_0W_0$&$A_1$&$A_0$&$1/392$&$3/196$&$1/56$&$0$\\
$U_0A_0$&$W_0$&$O$&$3/196$&$3/14$&$19/84$&$1/294$\\
$U_0A_1$&$O$&$W_0$&$1/28$&$1/392$&$1/24$&$-1/294$\\
$W_0A_0$&$B_0$&$U_0$&$1/336$&$3/196$&$1/56$&$1/2352$\\
$W_0B_0$&$C_0$&$A_0$&$1/2016$&$1/336$&$1/288$&$0$\\
$W_0C_0$&$A_1$&$B_0$&$1/336$&$1/2016$&$1/288$&$0$\\
$W_0A_1$&$U_0$&$C_0$&$1/392$&$1/336$&$1/336$&$1/392$\\\bottomrule
\end{tabular}
\end{table}
Collecting these weights at each vertex gives Table~\ref{tab:endpoint}. In general, $w_{ij}\ne w_{ji}$.
\begin{table}[ht]
\centering\small
\caption{Representative endpoint weights. Only nonboundary neighbors are listed; weights correspond to neighbors in the listed order.}\label{tab:endpoint}
\begin{tabular}{ccl}\toprule
Vertex&Neighbors&Weights\\\midrule
$A_0$&$O,U_0,W_0,U_7,W_7$&$-10/3,308/27,-28/3,308/27,-28/3$\\
$A_1$&$O,U_0,W_0,U_1,W_1$&$64,-336,336,-336,336$\\
$U_0$&$A_0,O,A_1,W_0$&$20482/27,-980/3,4116,-13720/3$\\
$W_0$&$A_0,B_0,C_0,A_1,U_0$&$-392,2352,-2352,392,0$\\
$B_0$&$W_0$&$0$\\
$C_0$&$W_0$&$0$\\\bottomrule
\end{tabular}
\end{table}

We justify the use of symmetry for these checks. Rotation by $Q$ preserves both coordinates and edge scalars after adding two to every index. The reflection $S(x,y)=(y,x)$ sends $A_i$ to $A_{2-i}$ and $U_i,W_i$ to $U_{1-i},W_{1-i}$, while exchanging $B_i$ with $C_{1-i}$. It preserves the scalars of Table~\ref{tab:current}. For an orientation-reversing isometry the roles of the two opposite vertices are exchanged when choosing the left triangle; the determinant sign and this exchange cancel in \eqref{eq:weights}. Endpoint weights are therefore carried to equal endpoint weights. Under either symmetry, $R_i$ is unchanged and $G_i$ is transformed by the corresponding orthogonal matrix. The seven representatives $O,A_0,A_1,U_0,W_0,B_0,C_0$ consequently suffice.

At $A_0$, the five vector contributions in the neighbor order of Table~\ref{tab:endpoint} are
\[
\begin{aligned}
G_{A_0}&=(10/3,0)+(-22/3,22/9)+(17/3,-19/9)\\
&\quad+(-22/3,-22/9)+(17/3,19/9)=(0,0).
\end{aligned}
\]
At $A_1$, in the same table order, they are
\[
\begin{aligned}
G_{A_1}&=(-16,-16)+(-36,12)+(48,-8)\\
&\quad+(12,-36)+(-8,48)=(0,0).
\end{aligned}
\]
For $U_0$ the vector contributions in the table order are
\[
G_{U_0}=(1463/3,-1463/9)+(350/3,70)+(-441,147)
+(-490/3,-490/9)=(0,0).
\]
For $W_0$ the four nonzero contributions are
\[
G_{W_0}=(-238,266/3)+(56,-28/3)+(238,-266/3)+(-56,28/3)=(0,0).
\]
The only nonboundary endpoint weights at $B_0,C_0$ are zero. Hence all $G_i$ vanish. Summing each row of Table~\ref{tab:endpoint}, together with \eqref{eq:center}, gives
\begin{equation}\label{eq:residual}
R_O=-56,\quad R_{A_{2j}}=22/27,\quad R_{A_{2j+1}}=64,\quad
R_{U_i}=-686/27,\quad R_{W_i}=R_{B_i}=R_{C_i}=0.
\end{equation}
For clarity, the nontrivial row sums are
\[
\begin{aligned}
R_{A_0}&=-10/3+2(308/27)-2(28/3)=22/27,\\
R_{A_1}&=64-336+336-336+336=64,\\
R_{U_0}&=20482/27-980/3+4116-13720/3=-686/27,\\
R_{W_0}&=-392+2352-2352+392=0.
\end{aligned}
\]
Lemma~\ref{lem:certificate}, multiplied by $27/2$, now gives \eqref{eq:identity}. If a spline had value one at $O$ and zero at every other vertex, its substitution in that identity would give $-756=0$. This contradiction proves Theorem~\ref{thm:main}.

\section{Consequences and exact matrix verification}
The obstruction uses only 17 of the 41 vertex values: $O$, the eight $A_i$, and the eight $U_i$. In particular, even arbitrary prescription on this subset is impossible. Every larger proposed interpolation set that contains all vertices has a dependent subset of functionals. Thus no addition of further value or derivative conditions can turn the full vertex set into independent interpolation conditions for this fixed space.

Every invertible affine image of the example has the same obstruction, with labels transported by that image. Indeed, pullback under an invertible affine map is a linear isomorphism between the two spline spaces and preserves vertex values. Thus the construction gives an affine family of counterexamples. This assertion does not imply persistence under arbitrary perturbations of individual vertices.

\subsection{A separate assembly of the constraints}
On each triangle, its three vertex values, six Cartesian first derivatives, and its barycenter value determine a unique cubic. To see unisolvence, suppose these ten quantities vanish. The polynomial restricts to zero on each edge because its endpoint values and tangential derivatives vanish. It is therefore a multiple of the product of the three barycentric coordinates. Its barycenter value makes that multiple zero. Dimension ten then gives existence as well as uniqueness.

Assign common values and gradients at every global vertex and one independent barycenter value per triangle. These $3|V|+|\Delta|$ parameters define piecewise cubics agreeing in value along every edge and in gradient at all vertices. The normal-derivative jump along an edge is quadratic and vanishes at its endpoints. It vanishes identically exactly when it also vanishes at the midpoint. One such equation per nonboundary edge therefore gives a matrix
\[
H=[H_f\ H_0]\in\R^{|E^\circ|\times(3|V|+|\Delta|)},
\]
where $H_f$ contains the vertex-value columns. This proves that $\ker H$ is linearly isomorphic to the entire spline space; no boundary conditions are imposed.

\begin{proposition}\label{prop:rank}
For this assembly on any conforming triangulation,
\[
\dim S^1_3(\Delta)=3|V|+|\Delta|-\rank H,\qquad
\rank E_V=|V|-\rank H+\rank H_0.
\]
\end{proposition}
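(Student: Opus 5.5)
The plan rests on the linear isomorphism $\Phi:\ker H\to S^1_3(\Delta)$ established just before the proposition. Write $N=3|V|+|\Delta|$ for the number of parameters. Since $H$ has $N$ columns, rank--nullity gives $\dim\ker H=N-\rank H$. Transporting this through $\Phi$ gives the first formula.

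For the second formula, the key step is to express $E_V$ through $\Phi$. Split a parameter vector as $x=(f,y)$, where $f\in\R^{V}$ holds the vertex values and $y$ holds the gradients and barycenter values. The columns of $H_0$ are indexed by $y$, so $H=[H_f\ H_0]$ and $Hx=H_ff+H_0y$. The spline $\Phi(x)$ takes the value $f_v$ at $v$ by construction. Hence $E_V\circ\Phi$ is the coordinate projection $\pi:(f,y)\mapsto f$ restricted to $\ker H$. Because $\Phi$ is an isomorphism, $\rank E_V=\dim\pi(\ker H)$.

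To compute this dimension, apply rank--nullity to $\pi|_{\ker H}$:
\[
\dim\pi(\ker H)=\dim\ker H-\dim(\ker H\cap\ker\pi).
\]
The intersection $\ker H\cap\ker\pi$ consists of the vectors $(0,y)$ with $H_0y=0$. It is therefore isomorphic to $\ker H_0$, which has dimension $(N-|V|)-\rank H_0$ because $H_0$ has $N-|V|$ columns. Substituting this and $\dim\ker H=N-\rank H$ gives
\[
\rank E_V=(N-\rank H)-(N-|V|-\rank H_0)=|V|-\rank H+\rank H_0.
\]

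A quick check of this is that $\pi(\ker H)=\{f: H_ff\in\operatorname{im}H_0\}$, so its dimension can also be read off from the codimensions of $\operatorname{im}H_0$ inside $\operatorname{im}H$. The only point needing care is that $\Phi$ is an isomorphism onto the \emph{whole} spline space, so that $E_V\circ\Phi$ really is $\pi$. Injectivity follows from unisolvence of the local cubic data. Surjectivity holds because every $C^1$ cubic spline has well-defined common vertex values and gradients, together with a barycenter value on each triangle, and these satisfy the midpoint equations. Both facts were already argued in the preceding paragraph, so I expect no real obstacle.
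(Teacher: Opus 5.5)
Your proof is correct and follows the paper's argument. You apply rank--nullity to $H$, then to the vertex-value projection restricted to $\ker H$, whose kernel $\{(0,y):H_0y=0\}$ has dimension $2|V|+|\Delta|-\rank H_0$; your explicit identification of $E_V\circ\Phi$ with that projection makes precise a step the paper leaves implicit.
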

\begin{proof}
The first identity is rank--nullity. The kernel of the restriction of the coordinate projection $\ker H\to\R^{|V|}$ consists of the vectors $(0,z)$ with $H_0z=0$, and has dimension $2|V|+|\Delta|-\rank H_0$. Subtract this from $\dim\ker H$.
\end{proof}

The supplementary verifier assembles $H$ using Cartesian monomials and midpoint normal derivatives, independently of \eqref{eq:weights}. Exact rational elimination for the present example returns
\begin{equation}\label{eq:computed}
\rank H=72,\qquad\rank H_0=71.
\end{equation}
These are reported computational results, reproducible with the supplied code. They imply $\dim S^1_3(\Delta)=179-72=107$ and $\rank E_V=40$. Consequently the hyperplane defined by \eqref{eq:identity} is exactly the space of attainable vertex data. The explicit proof above requires only its inclusion in that hyperplane.

\subsection{Dimension and scope}
A singular interior vertex is a four-valent vertex whose edges lie on two lines. Here $O$ has eight incident direction lines; $U_i$ and $W_i$ each have three. Thus $\sigma(\Delta)=0$. The computed dimension is
\[
107=3\cdot24+2\cdot17+1,
\]
which is the classical lower bound discussed in \cite{LS2007,Alfeld2016}. Thus attainment of this dimension bound does not imply independence of the vertex evaluations.

The interior graph in this construction is a tree of maximum degree eight. Hence the hypothesis that $G_I(\Delta)$ is a tree does not guarantee vertex interpolation. The example does not settle the restriction to path graphs, and we make no claim that 41 vertices, 17 interior vertices, or eight branches are necessary for an obstruction. All leaves $W_i$ of the interior graph have zero coefficient in \eqref{eq:identity}, whereas $O$ and the vertices $U_i$ have nonzero coefficients. Vanishing coefficients at the original leaves alone therefore do not exclude a global obstruction.

\section*{Supplementary verification}
The accompanying Python script verifies the finite construction and computes \eqref{eq:computed} using exact rational arithmetic in SymPy. All geometric and algebraic data needed for the nonexistence proof are given in the main text, including the complete representative determinant and endpoint-weight tables. The script supplies an additional check and the two exact matrix ranks; it is not a premise of Theorem~\ref{thm:main}.

\bibliographystyle{plainnat}
\bibliography{references}
\end{document}